\newcommand{\Z}{\mathbb{Z}}
\newcommand{\C}{\mathbb{C}}
\newtheorem{theorem}{Theorem}
\newtheorem{proposition}{Proposition}
\newtheorem{lemma}{Lemma}
\theoremstyle{remark}
\newtheorem{remark}{Remark}
\newtheorem{conjecture}{Conjecture}
\title{A simple model of 4d-TQFT}
\author{Rinat Kashaev}
\address{Section de math\'ematiques, Universit\'e de Gen\`eve,
2-4 rue du Li\`evre, 1211 Gen\`eve 4, Suisse\\}
\date{May 22, 2014}
\thanks{Supported in part by Swiss National Science Foundation}
\begin{document}

\begin{abstract} 
We show that, associated with any complex root of unity $\omega$, there exists a particularly simple 4d-TQFT model $M_\omega$  defined on the cobordism category of Delta complexes. For an oriented closed 4-manifold $X$ of Euler characteristic $\chi(X)$,  it is conjectured that the quantity $N^{3\chi(X)/2}M_\omega(X)$, where $N$ is the order of $\omega$, takes only finitely many values as a function of $\omega$. In particular,  it  is equal to 1 for $S^4$, $\left(3+(-1)^{N}\right)/2$ for $S^2\times S^2$, and $N^{-1/2}\sum_{k=1}^N\omega^{k^2}$ for $\C P^2$.
\end{abstract}
\maketitle
\section{Introduction}
\emph{Pachner or bistellar moves} are known to form a finite set of operations on triangulations such that arbitrary triangulations of a piecewise linear (PL) manifold can be related by a finite sequence of Pachner moves \cite{MR1095161,MR1734414}. As a result, the combinatorial framework of triangulated PL manifolds combined with algebraic realizations of Pachner moves can be useful for constructing combinatorial 4-dimensional topological quantum field theories (TQFT) \cite{MR953828,MR1001453}. Realization of this scheme in three dimensions has been initiated in the Regge--Ponzano model~\cite{PonzanoRegge1968}, where the Pachner moves are realized algebraically in terms of the angular momentum $6j$-symbols satisfying the five term Biedenharn--Elliott identity~\cite{MR0052585, Elliott1953}, which has eventually lead to the Turaev--Viro TQFT model~\cite{MR1191386} and subsequent generalizations based on the theory of linear monoidal categories~\cite{MR1292673}. The same scheme in four dimensions is more difficult to realize, mainly because of complicated nature of algebraic constructions generalizing those of the linear monoidal categories though some realizations are known~\cite{MR1273569, MR1295461, MR1706684, MR1931150, MR3116189}. 
In this paper, to any complex root of unity $\omega$, we associate a rather simple model $M_\omega$ of 4d-TQFT defined on the cobordism category of Delta complexes \cite{MR1867354}. The definition is as follows.

We denote by $N\equiv \operatorname{ord}(\omega)$ the order of $\omega$, and we recall that in any Delta complex realizing an oriented $d$-manifold, each $d$-simplex $S$ comes equipped with a sign $\epsilon(S)$ taking the positive value 1 if the orientation induced by the linear order on the vertices of $S$ agrees with the orientation of the manifold.
We specify $M_\omega$ by associating the vector space $\C^N$  to each positive tetrahedron and the dual vector space $\left(\C^N\right)^*$ to each negative tetrahedron.
For a pentachoron (4-simplex) $P$ realizing an oriented 4-ball, we associate the vector 
\begin{equation}
M_\omega(P)\in M_\omega(\partial P)=\otimes_{i=0}^4M_\omega(\partial_iP)\end{equation}
defined by the formula
\begin{equation}
M_\omega(P)=\left\{
\begin{array}{cl}
 Q&\mathrm{if}\ \epsilon(P)=1;\\
 \bar Q& \mathrm{otherwise}.
\end{array}
\right.
\end{equation}
where 
\begin{align}\label{eq:Q}
Q&=\frac1{\sqrt{N}}\sum_{k,l,m\in\Z/N\Z}\omega^{km}e_k\otimes \bar e_{k+l}\otimes e_l\otimes \bar e_{l+m}\otimes e_m,\\
\label{eq:barQ}
\bar Q&=\frac1{\sqrt{N}}\sum_{k,l,m\in\Z/N\Z}\omega^{-km}\bar e_k\otimes e_{k+l}\otimes \bar e_l\otimes e_{l+m}\otimes \bar e_m
\end{align}
 with $\{e_k\}_{k\in\Z/N\Z}$  and $\{\bar e_k\}_{k\in\Z/N\Z}$ being  the canonical dual bases of $\C^N$ and $\left(\C^N\right)^*$ respectively.
 
Let $X$ be an arbitrary Delta complex representing an oriented 4-manifold. We define
\begin{equation}
M_\omega(X)=N^{-|X_0^{\mathrm{int}}|}\operatorname{Ev}(\otimes_{P\in X}M_\omega(P))
\end{equation}
where the tensor product is taken over all pentachora of $X$, 
$\operatorname{Ev}$ is the operation of contracting along all the internal tetrahedra of $X$, and $|X_0^{\mathrm{int}}|$ is the number of internal vertices of $X$. Our main result is the following theorem.
\begin{theorem}\label{thm}
 $M_\omega$ is a well defined 4d-TQFT.
\end{theorem}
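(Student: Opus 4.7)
The plan is to verify the standard list of axioms of a 4d-TQFT: (i) $M_\omega(X)$ is independent of the Delta-complex structure for a closed oriented 4-manifold $X$; (ii) $M_\omega$ is functorial under gluing of cobordisms along oriented closed 3-manifolds; and (iii) the normalization conditions (empty manifold, disjoint union) hold. By Pachner's theorem, any two PL triangulations of a closed oriented 4-manifold can be connected by a finite sequence of bistellar moves, which in dimension four are of three types, namely $1\leftrightarrow 5$, $2\leftrightarrow 4$, and $3\leftrightarrow 3$. Hence (i) reduces to verifying one local tensor identity per move for the $Q$-tensor in \eqref{eq:Q} (and simultaneously for $\bar Q$ in \eqref{eq:barQ}), carefully tracking the signs $\epsilon(P)$ which dictate whether a given pentachoron contributes $Q$ or $\bar Q$ and whether a given boundary tetrahedron carries $\C^N$ or $(\C^N)^*$.

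For the $1\leftrightarrow 5$ move, a pentachoron $P$ is subdivided by the cone on a new interior vertex, producing five new pentachora and ten new interior tetrahedra. The algebraic identity to check is that the contraction of the resulting five $Q$'s along these ten tetrahedra equals $N\cdot Q$ on $\partial P$; the extra factor of $N$ is exactly what is absorbed by the $N^{-1}$ coming from the new interior vertex in the prefactor $N^{-|X_0^{\mathrm{int}}|}$. For the $2\leftrightarrow 4$ and $3\leftrightarrow 3$ moves no new interior vertex is introduced, so one must verify a pure equality of two multi-contracted tensors on the common boundary with no $N$-prefactor. In all three cases, after expanding the $Q$'s in the canonical basis, the identity reduces to a sum over several copies of $\Z/N\Z$ whose phase is a bilinear form $\omega^{\sum k_i m_j}$; by linear changes of summation variable together with repeated use of the Fourier identity $\tfrac1N\sum_{k\in\Z/N\Z}\omega^{k(x-y)}=\delta_{xy}$, these sums should collapse so that the two sides match term by term.

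Property (ii) follows because, when two cobordisms $X_1$ and $X_2$ are glued along a triangulated closed 3-manifold $Y$, the tetrahedra of $Y$ become internal in $X_1\cup_Y X_2$, and the corresponding $\operatorname{Ev}$-operations combine into a single contraction via the canonical pairing $\C^N\otimes(\C^N)^*\to\C$ of matched positive and negative tetrahedra on the two sides of $Y$; the count of interior vertices is additive, and any interior vertex of $Y$ in the glued complex produces exactly the $N^{-1}$ required for compatibility. Property (iii) is immediate from the definition: $\operatorname{Ev}$ of the empty tensor product is $1$, and disjoint unions yield tensor products of values.

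I expect the main obstacle to be the $3\leftrightarrow 3$ identity. It is the most delicate of the three moves combinatorially, since it involves the largest number of tensor factors and requires matching the indices of the boundary tetrahedra across the two sides of the move (which correspond to two different splittings of $\partial\Delta^5$ into a pair of PL 4-balls with three pentachora each). Once the bijection between boundary indices on the two sides is properly set up, the identity should reduce to a Gauss-type sum identity on $(\Z/N\Z)^k$ whose invariance under the relevant index permutation encodes the symmetry of the bicharacter $(k,m)\mapsto\omega^{km}$ with respect to the combinatorial structure of the pentachoron.
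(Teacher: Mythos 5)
Your outline captures the second half of the paper's argument---verification of the three Pachner moves $(3,3)$, $(2,4)$ and $(1,5)$, including the correct bookkeeping of the factor $N$ absorbed by the new interior vertex in the $(1,5)$ move---but it omits an essential first step: independence of the \emph{branching}. The model is defined on Delta complexes, where each simplex carries a linear order on its vertices, and this order determines both the sign $\epsilon(P)$ (hence whether a pentachoron contributes $Q$ or $\bar Q$) and whether a tetrahedron carries $\C^N$ or $(\C^N)^*$. Pachner's theorem relates unordered triangulations, so before one can invoke it one must show that $M_\omega$ is insensitive to the choice of vertex orderings; the paper does this in Proposition~\ref{prop1} via the symmetry relations of Lemma~\ref{lem1}, which express $Q$ as the image of $\bar Q$ under tensor products of explicit operators $S$, $T$, $P$ built from the Gaussian $\Phi(k)=(\sqrt{\omega})^{k(k+N)}$. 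This step is not a formality, for a second reason as well: each Pachner move occurs in a Delta complex with some branching, and only for one canonical ordering of the vertices of the $5$-simplex do the six boundary pentachora split into three positive and three negative ones so that the move takes the single form \eqref{eq:pachner3-3} (resp.\ \eqref{eq:pachner2-4}, \eqref{eq:pachner1-5}) that actually gets verified; all other branched forms are reduced to these by the same symmetry relations. Without Lemma~\ref{lem1}, your plan of ``carefully tracking the signs'' would leave you with an unmanaged family of distinct identities to check, and no argument that the value on a closed manifold is independent of the ordering in the first place.

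A secondary point: the mechanism you predict for the verifications (collapse via the Fourier identity $\tfrac1N\sum_{k}\omega^{k(x-y)}=\delta_{x,y}$ and Gauss-type sums) is not what happens. The tensor $Q^{i,j,k}_{l,m}=N^{-1/2}\,\omega^{ik}\delta_{l,i+j}\delta_{m,j+k}$ already contains Kronecker deltas, and all three identities follow by direct substitution and shifts of summation variables; no nontrivial character-sum identity is needed. Moreover, the paper does not treat the three moves as independent: the $(2,4)$ relation is derived from the twisted pentagon form \eqref{eq:twist-pent-1} of the $(3,3)$ relation, and $(1,5)$ from $(2,4)$, so $(3,3)$ is the only identity carrying real content---consistent with your expectation that it is the delicate one, but organized so that the other two come for free.
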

The paper is organized as follows. In the next two sections we prove Theorem~\ref{thm} by showing the independence of $M_\omega$ on the branching of Delta triangulations and its invariance under the Pachner moves. In the last section, we provide examples of calculation which hint that the associated invariant despite the simplicity of the model might be interesting.

\section{Behavior under branching changes}
Any Delta triangulation comes equipped with a branching meaning that the vertices of each triangle are linearly ordered. 
\begin{proposition}\label{prop1}
 For any two compact oriented 4-manifold Delta triangulations $X$ and $Y$ differing by a change of branching, one has the equality 
\begin{equation}
M_\omega(X)=b(M_\omega(Y)),\quad b\colon M_\omega(\partial X)\to M_\omega(\partial Y).
\end{equation}
where $b$ is an isomorphism of vector spaces.
\end{proposition}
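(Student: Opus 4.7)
The plan is to reduce Proposition~\ref{prop1} to a local equivariance property of the tensor $Q$ under the symmetric group $S_5$ acting on the vertex labels of a single pentachoron. Any change of branching on a Delta complex can be written as a finite sequence of elementary branching changes, each of which acts on every pentachoron $P$ of $X$ by some (possibly trivial) permutation $\sigma_P\in S_5$ of its five vertex labels. It therefore suffices to analyze how $M_\omega(P)$ transforms when the vertices of a single pentachoron are relabeled.

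Denote by $P^\sigma$ the pentachoron obtained from $P$ by relabeling its vertices by $\sigma\in S_5$. Then $\epsilon(P^\sigma)=\operatorname{sgn}(\sigma)\epsilon(P)$, so the assignment switches between $Q$ and $\bar Q$ precisely when $\sigma$ is odd; moreover $\sigma$ permutes the boundary tetrahedra $\partial_iP$ and induces on each $\partial_iP$ a reordering of its four vertices whose sign flips that face's orientation. The required identity is
\[
M_\omega(P^\sigma)=A(\sigma)\bigl(M_\omega(P)\bigr),
\]
where $A(\sigma)\colon M_\omega(\partial P)\to M_\omega(\partial P^\sigma)$ is the canonical isomorphism built from (i) the permutation of the five tensor slots, (ii) the dualization $\C^N\leftrightarrow(\C^N)^*$ applied to each slot whose face orientation flips, and (iii) the relabeling of the basis vectors $e_k,\bar e_k$ within each slot according to the induced vertex reordering of the corresponding tetrahedron. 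Since $S_5$ is generated by the adjacent transpositions $(i\ i{+}1)$, $i=0,1,2,3$, the claim reduces to four explicit verifications. Each verification rewrites both sides as a sum over $\Z/N\Z$ and, after a change of summation variables, collapses one Kronecker delta via the discrete Fourier identity $\tfrac1N\sum_k\omega^{kn}=\delta_{n\equiv 0}$.

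Once the local equivariance is established, the global map $b\colon M_\omega(\partial X)\to M_\omega(\partial Y)$ is assembled from the restrictions of the $A(\sigma_P)$ to the boundary tetrahedra. For every internal tetrahedron $T$ the induced relabeling acts simultaneously on the $\C^N$ side belonging to one adjacent pentachoron and on the $(\C^N)^*$ side belonging to the other, so it commutes with the evaluation pairing used in $\operatorname{Ev}$ and the contractions are unaffected; the prefactor $N^{-|X_0^{\mathrm{int}}|}$ is likewise invariant because a branching change does not alter the set of internal vertices. The main obstacle is the case-by-case verification of the four adjacent transpositions: one must track simultaneously the permutation of the five tensor slots, the dualizations forced by the flipped face orientations, the within-slot basis relabelings, and the $Q\leftrightarrow\bar Q$ interchange coming from $\operatorname{sgn}(\sigma)=-1$, and check that all of these combine into a single equivariance at the pentachoron level.
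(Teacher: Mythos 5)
Your overall strategy (reduce to elementary branching changes, verify a local transformation law for $Q$ at the level of a single pentachoron, then check that the local isomorphisms cancel on internal tetrahedra and assemble into $b$ on the boundary) is the same skeleton as the paper's proof, which decomposes a branching change into single edge-orientation reversals and invokes a symmetry lemma for $Q$. However, there is a genuine gap at the heart of your argument: the claimed local equivariance $M_\omega(P^\sigma)=A(\sigma)(M_\omega(P))$ is \emph{false} for the map $A(\sigma)$ you describe, namely a slot permutation combined with dualizations and basis relabelings. The tensor $Q$ is not equivariant under that naive action of $S_5$; if it were, the proposition would be essentially formal. The correct transformation law (the paper's Lemma~\ref{lem1}) requires genuinely nontrivial intertwiners: $T\bar e_k=\Phi(k)e_{-k}$, which carries the Gaussian weight $\Phi(k)=(\sqrt{\omega})^{k(k+N)}$ and is therefore a monomial map with non-unimodular-in-$k$ phases rather than a relabeling, and $S\bar e_k=N^{-1/2}\sum_l\Phi(k-l)e_l$, which is a Gaussian-convolution (Fourier-type) operator and is not monomial at all. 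Your own remark that the verification should use $\frac1N\sum_k\omega^{kn}=\delta_{n\equiv0}$ already signals that a Fourier-type operator must enter, contradicting the definition of $A(\sigma)$ as a permutation-plus-relabeling. As written, the four ``explicit verifications'' you defer to would simply fail.

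This error propagates to the gluing step. Because the local maps are $S$ and $T$ rather than relabelings, the statement that they ``commute with the evaluation pairing'' on internal tetrahedra is not the right mechanism: what one actually needs, and what the paper uses, is that the operators induced on the two pentachora sharing an internal tetrahedron are mutually inverse, $S^{-1}=\bar S$ and $T^{-1}=\bar T$ (Hermitian conjugates), so that they \emph{cancel} under the contraction $\operatorname{Ev}$. Correspondingly, the resulting boundary isomorphism $b$ is not the canonical permutation/dualization map but the tensor product of the leftover $S$ and $T$ operators on boundary tetrahedra; the proposition only asserts that $b$ is some vector-space isomorphism precisely because it is not canonical. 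To repair your proof you would need to replace your $A(\sigma)$ by the operators of Lemma~\ref{lem1} and then prove that lemma by the explicit Gaussian-sum manipulations the paper carries out.
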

Let us fix a square root $\sqrt{\omega}$. Following \cite{MR1193849}, we define a function
\begin{equation}
\Phi\colon \Z/N\Z\to\C,\quad \Phi(k)=\left(\sqrt{\omega}\right)^{k(k+N)},
\end{equation}
which has the obvious properties 
\begin{equation}\label{eq:propPhi}
\Phi(k)^2=\omega^{k^2},\quad \Phi(-k)=\Phi(k),\quad \Phi(k+l)=\Phi(k)\Phi(l)\omega^{kl}.
\end{equation}
We also denote
\begin{equation}
\bar\Phi(k)\equiv \frac1{\Phi(k)}.
\end{equation}
Next, we define two vector space isomorphisms
\begin{equation}
S,T\colon \left(\C^N\right)^*\to\C^N,
\end{equation}
by the formulae
\begin{equation}
S\bar e_k=\frac1{\sqrt{N}}\sum_{l\in\Z/N\Z}\Phi(k-l) e_{l},\quad T\bar e_k=\Phi(k) e_{-k}.
\end{equation}
Notice that their inverses are given by the Hermitian conjugate maps :
\begin{equation}
S^{-1}e_k=\bar Se_k=\frac1{\sqrt{N}}\sum_{l\in\Z/N\Z}\bar \Phi(k-l)\bar e_{l},
\quad
T^{-1}e_k=\bar Te_k=\bar \Phi(k) e_{-k}.
\end{equation}
We also define the permutation maps
\begin{equation}
P\colon \left(\C^N\right)^*\otimes \C^N\to \C^N \otimes 
 \left(\C^N\right)^* , \quad \bar P=P^{-1}\colon\C^N \otimes 
 \left(\C^N\right)^*\to  \left(\C^N\right)^*\otimes \C^N.
\end{equation}
The proof of Proposition~\ref{prop1}  is based on the following lemma.
\begin{lemma}\label{lem1} One has the equalities
\begin{multline}\label{eq:sym-rel}
Q=(P\otimes T\otimes \bar T\otimes T)\bar Q=
(T\otimes  \bar P\otimes \bar S\otimes S)\bar Q\\
=(S\otimes\bar S\otimes  P\otimes T)\bar Q
=(T\otimes \bar T\otimes T\otimes   \bar P )\bar Q
\end{multline}
where the vectors $Q$ and $\bar Q$ are defined in \eqref{eq:Q} and \eqref{eq:barQ}.
\end{lemma}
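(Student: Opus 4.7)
The plan is to verify each of the four equalities by direct computation: apply the operators on the right-hand side factor by factor to the basis expansion of $\bar Q$, reindex the summations so that the resulting basis tensor matches that of $Q$, and simplify the remaining scalar coefficient to $\omega^{ac}$ by means of the quadratic functional equation $\Phi(k+l)=\Phi(k)\Phi(l)\omega^{kl}$ together with $\Phi(-k)=\Phi(k)$ from~\eqref{eq:propPhi}. The target spaces match by construction in each of the four lines, so only the coefficients need to be compared.

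The first and fourth equalities are the easier ones, since only $P$, $\bar P$, $T$ and $\bar T$ enter and no new summations appear. For the first, I would substitute $a=k+l$, $b=-l$, $c=-m$; the basis tensor of $(P\otimes T\otimes\bar T\otimes T)\bar Q$ then takes the form $e_a\otimes\bar e_{a+b}\otimes e_b\otimes\bar e_{b+c}\otimes e_c$, and the coefficient $\omega^{-km}\Phi(l)\bar\Phi(l+m)\Phi(m)$ collapses to $\omega^{ac}$ after expanding $\Phi(l+m)$ via~\eqref{eq:propPhi}. The fourth equality goes identically, with the substitution $k=-a$, $l=-b$, $m=b+c$ and the same cancellation pattern.

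The second and third equalities each involve both $S$ and $\bar S$, contributing two auxiliary summations and a combined prefactor $1/N$. After reindexing, the basis matching expresses three of the five active summation indices together with the two auxiliary ones in terms of $a,b,c$, leaving a single surplus summation; crucially, once the $\Phi$'s are expanded via~\eqref{eq:propPhi}, the total phase turns out to be independent of this surplus index, both times evaluating to $\omega^{ac}$. Summing the free index over $\Z/N\Z$ then produces a factor of $N$ that cancels the $1/N$ prefactor, and one recovers $Q$ with its original $1/\sqrt N$ normalization.

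The computations are mechanical. The one point requiring care is the bookkeeping of indices: the positions of $P$ or $\bar P$, which swap two neighbouring factors, relative to the positions of $S,T$, which act diagonally, and the choice of reindexing in each case. The single substantive ingredient, used uniformly in all four lines, is the quadratic functional equation for $\Phi$ in~\eqref{eq:propPhi}; it is precisely what allows every product of $\Phi$'s and every residual $\omega$-quadratic phase to cancel against each other, leaving only the pure phase $\omega^{ac}$ demanded by $Q$.
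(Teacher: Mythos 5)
Your proposal is correct and follows essentially the same route as the paper: apply the operators factor by factor to the basis expansion of $\bar Q$, reindex, and cancel the $\Phi$'s against the quadratic phase using $\Phi(k+l)=\Phi(k)\Phi(l)\omega^{kl}$ and $\Phi(-k)=\Phi(k)$ (the paper carries this out explicitly only for the first equality and declares the rest similar, whereas you also correctly identify the one extra feature of the $S$-containing cases, namely the surplus summation whose $N$ terms cancel the $1/N$ prefactor). The only quibble is the bookkeeping sentence about ``three of the five active summation indices together with the two auxiliary ones'': in fact two of the three original indices plus both auxiliary ones are determined by the basis matching, leaving one free, but this does not affect the argument.
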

\begin{proof}
 Let us prove the first equality:
 \begin{multline}
\sqrt{N}(P\otimes T\otimes \bar T\otimes T)\bar Q=\sum_{k,l,m\in\Z/N\Z}\omega^{-km}e_{k+l}\otimes \bar e_k\otimes T\bar e_l\otimes \bar Te_{l+m}\otimes T\bar e_m\\=
\sum_{k,l,m\in\Z/N\Z}\omega^{-km}\Phi(l)\bar\Phi(l+m)\Phi(m)e_{k+l}\otimes \bar e_k\otimes e_{-l}\otimes \bar e_{-l-m}\otimes e_{-m}\\=
\sum_{k,l,m\in\Z/N\Z}\omega^{-km-lm}e_{k+l}\otimes \bar e_k\otimes e_{-l}\otimes \bar e_{-l-m}\otimes e_{-m}\\=
\sum_{k,l,m\in\Z/N\Z}\omega^{-km}e_{k}\otimes \bar e_{k-l}\otimes e_{-l}\otimes \bar e_{-l-m}\otimes e_{-m}\\=
\sum_{k,l,m\in\Z/N\Z}\omega^{km}e_{k}\otimes \bar e_{k+l}\otimes e_{l}\otimes \bar e_{l+m}\otimes e_{m}=\sqrt{N}Q
\end{multline}
where in the third equality we used the last property in \eqref{eq:propPhi}, in the forth equality we shifted the summation variable $k\to k-l$, and in the fifth equality we negated the summation variables $l$ and $m$. The other relations are proved in a similar manner.
\end{proof}
\begin{proof}[Proof of Proposition~\ref{prop1}]
For a triangle $f$ of a Delta triangulation, let $C(f)$ be the set of tetrahedra containing $f$. 
Let $X$ and $Y$ be two Delta triangulations differing in the orientation of only one edge $e$. The change of the orientation of $e$ results in changing the sign of each  pentachoron of $X$ containing $e$. By applying the appropriate equality of Lemma~\ref{lem1} to each such pentachoron in $M_\omega(X)$ we observe that for each triangle $f$ containing $e$, there is a cancellation of a pair of $S$ or $T$ operators for each internal tetrahedron of $C(f)$. In this way, we immediately obtain the equality $M_\omega(X)=b(M_\omega(Y))$ where $b$ is given the tensor product of non-canceled $S$ or $T$ operators acting on the boundary tetrahedra. We finish the prove by remarking that any branching change can be obtained as a finite sequence of single edge orientation changes.
\end{proof}
\section{Invariance under the Pachner moves}

A Pachner move in dimension $4$ is associated with a splitting of the boundary of a $5$-simplex into two non-empty disjoint sets of $4$-simplices (pentachora). A Pachner move is called of the \emph{type} $(k,l)$ with $k+l=6$, if the two disjoint subsets of pentachora consist of $k$ and $l$  elements respectively. Thus, altogether, we have Pachner moves of three possible types (3,3), (2,4) and (1,5). Let us discuss in more detail their algebraic realizations in terms polynomial identities for the matrix coefficients of the vectors \eqref{eq:Q} and \eqref{eq:barQ} defined by the formulae:
\begin{equation}\label{eq:qijklm}
Q^{i,j,k}_{l,m}\equiv\langle \bar e_i\otimes e_l\otimes \bar e_j\otimes e_m\otimes \bar e_k,Q\rangle=\frac1{\sqrt{N}}\omega^{ik}\delta_{l,i+j}\delta_{m,j+k}
\end{equation}
and 
\begin{equation}\label{eq:qijklm-bar}
\bar Q_{i,j,k}^{l,m}\equiv\langle  e_i\otimes \bar e_l\otimes e_j\otimes \bar e_m\otimes e_k,\bar Q\rangle=\frac1{\sqrt{N}} \omega^{-ik}\delta_{l,i+j}\delta_{m,j+k}
\end{equation}
\subsection{The type (3,3)}
This is the most fundamental Pachner move as it is the only one which can be written in the form involving only the pentachora of one and the same sign and, in a sense, it implies all other types. 

Consider a 5-simplex with linearly ordered vertices $A=\{v_0,v_1,\dots, v_5\}$. Its boundary is composed of six pentachora $\partial_iA=A\setminus\{v_i\}$ of which three are positive corresponding to even $i$'s and three are negative corresponding to odd $i$'s.  All even (respectively odd) pentachora compose a 4-ball, to be called \emph{even} (respectively \emph{odd}) 4-ball, so that the boundary of both balls are naturally identified as simplicial complexes. Both of these balls, when considered separately, are composed only in terms of positive pentahorons, and the corresponding algebraic condition on the vector $Q$ takes the form
\begin{equation}\label{eq:pachner3-3}
\sum_{s,t,u}Q^{i,l,m}_{s,t}Q^{s,j,n}_{p,u}Q^{t,u,k}_{q,r}=\sum_{s,t,u}Q^{m,n,k}_{s,t}Q^{l,j,t}_{u,r}Q^{i,u,s}_{p,q}
\end{equation}
where the left hand side corresponds to the even 4-ball and the right hand side to the odd one, while the summations in both sides correspond to their own  interior tetrahedra. Namely, denoting the tetrahedron $A\setminus\{v_i,v_j\}$ by $A_{ij}$,  the indices $s,t,u$ correspond to the tetrahedra $A_{02}$, $A_{04}$ and $A_{24}$ in the even 4-ball, and the tetrahedra  $A_{15}$, $A_{35}$ and $A_{13}$ in the odd 4-ball, while the exterior indices $i,j,k,l,m,n,p,q,r$ on both sides correspond to the boundary tetrahedra $A_{01}$, $A_{23}$, $A_{45}$, $A_{03}$, $A_{05}$, $A_{25}$, $A_{12}$, $A_{14}$, $A_{34}$ respectively. All other forms of the Pachner relation of the type (3,3) can be obtained from \eqref{eq:pachner3-3} by applying the symmetry relations~\eqref{eq:sym-rel}.
\begin{lemma}
 The Pachner relation  \eqref{eq:pachner3-3} holds true for the weights \eqref{eq:qijklm}.
\end{lemma}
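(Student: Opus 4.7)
The plan is to verify the identity \eqref{eq:pachner3-3} by direct substitution of the explicit weights \eqref{eq:qijklm} and a term-by-term comparison of the two sides as tensors in the nine exterior indices $i,j,k,l,m,n,p,q,r$.

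First I would expand the left-hand side. Each of the three $Q$-factors contributes a prefactor $N^{-1/2}$, a single power of $\omega$, and a pair of Kronecker deltas, so altogether one obtains $N^{-3/2}$ times a product of three phases and six deltas involving the three summation variables $s,t,u$ and the nine exterior indices. Because each summation variable appears on the first slot of a $\delta$ that fixes it, the six deltas determine $s,t,u$ uniquely in terms of the exterior indices and collapse the triple sum to a single term; this leaves three residual deltas imposing linear relations among the exterior indices and a single accumulated phase which, after substituting the values of $s,t,u$, becomes an explicit quadratic form in $i,j,k,l,m,n$ with coefficients in $\Z/N\Z$.

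Next I would perform the analogous computation for the right-hand side, where a different labelling of the internal tetrahedra (according to the scheme $A_{15},A_{35},A_{13}$ described before the lemma) leads to a different, but similarly organized, collapsed expression. The proof then reduces to two elementary checks: that the three residual linear conditions on the exterior indices produced by the two sides coincide, and that the two resulting quadratic phases agree as elements of $\Z/N\Z$.

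The only real obstacle is clerical: one has to trace which of the nine exterior indices $i,\dots,r$ is attached to which boundary tetrahedron $A_{01},A_{23},\dots,A_{34}$ on each side, and keep straight the two shift conventions built into the definition of $Q^{i,j,k}_{l,m}$. Beyond this bookkeeping, no nontrivial property of $\omega$ beyond $\omega^N=1$ is needed—no Gauss-type summation appears, because all sums are eliminated by the deltas—so once the two phases and the two triples of residual deltas are written out side by side, the equality is immediate.
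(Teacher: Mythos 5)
Your proposal is correct and follows essentially the same route as the paper: substitute the explicit weights, let the Kronecker deltas fix $s,t,u$ and collapse the sums, and then check that the three residual deltas and the accumulated phase (which both sides reduce to $\omega^{im+in+ln+lk+mk}$ times $\delta_{p,i+l+j}\delta_{q,l+j+m+n}\delta_{r,j+n+k}$) coincide. Your observation that no property of $\omega$ beyond $\omega^N=1$ is used is also accurate.
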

\begin{proof}
 Ba substituting one after another the explicit forms from \eqref{eq:qijklm}, we have
\begin{multline*}
 N^{3/2}(\mathrm{l.h.s.\ of\ \eqref{eq:pachner3-3}})=\sum_{u}\omega^{im}Q^{i+l,j,n}_{p,u}Q^{l+m,u,k}_{q,r}
 =\omega^{im+(i+l)n}\delta_{p,i+l+j}Q^{l+m,j+n,k}_{q,r}\\=\omega^{im+(i+l)n+(l+m)k}\delta_{p,i+l+j}\delta_{q,l+m+j+n}\delta_{r,j+n+k},
\end{multline*}
and, similarly,
\begin{multline*}
 N^{3/2}(\mathrm{r.h.s.\ of\ \eqref{eq:pachner3-3}})=\sum_{u}\omega^{mk}Q^{l,j,n+k}_{u,r}Q^{i,u,m+n}_{p,q} 
 =\omega^{mk+l(n+k)}\delta_{r,j+n+k}Q^{i,l+j,m+n}_{p,q} \\=\omega^{mk+l(n+k)+i(m+n)}\delta_{r,j+n+k}\delta_{p,i+l+j}\delta_{q,l+j+m+n}.
 \end{multline*}
 Comparing the obtained expressions, we see that they are the same.
\end{proof}
\begin{remark}
 It is interesting to note that by defining three linear maps
\begin{multline}\label{eq:lmr}
L^i,M^j,R^k\colon \C^N\otimes\C^N\to  \C^N\otimes\C^N, \quad Q^{i,j,k}_{l,m}\\=\langle \bar e_j\otimes \bar e_k,L^i(e_l\otimes e_m)\rangle=\langle \bar e_i\otimes \bar e_k,M^j(e_l\otimes e_m)\rangle=\langle \bar e_i\otimes \bar e_j,R^k(e_l\otimes e_m)\rangle,
\end{multline}
we can rewrite the system~\eqref{eq:pachner3-3} as a 3-index family of matrix Yang--Baxter relations in  $\C^N\otimes\C^N\otimes\C^N$:
\begin{equation}
L^i_{12}M^j_{13}R^k_{23}=R^k_{23}M^j_{13}L^i_{12}
\end{equation}
with the standard meaning of the subscripts, for example, $L^i_{12}\equiv L^i\otimes\operatorname{id}_{\C^N}$, etc. It would be interesting to understand the significance of this fact in relationships of 4d-TQFT with lattice integrable models of statistical mechanics.
\end{remark}
\begin{remark}
Another equivalent form of the system~\eqref{eq:pachner3-3} is given by a 3-index family of ``twisted'' pentagon relations either for the $R^i$-matrices
\begin{equation}\label{eq:twist-pent-1}
R^{m}_{12}R^{n}_{13}R^{k}_{23}=\sum_{s,t}Q^{m,n,k}_{s,t}R^{t}_{23}R^{s}_{12}=\frac1{\sqrt{N}}\omega^{mk}R^{n+k}_{23}R^{m+n}_{12},
\end{equation}
or for the $L^i$-matrices
\begin{equation}
L^{m}_{23}L^l_{13}L^{i}_{12}=\sum_{s,t}Q^{i,l,m}_{s,t}L^{s}_{12}L^{t}_{23}=\frac1{\sqrt{N}}\omega^{im}L^{i+l}_{12}L^{l+m}_{23},
\end{equation}
where we use the matrices defined in \eqref{eq:lmr}.
\end{remark}
\subsection{The type (2,4)}
We split  the pentachora of the 5-simplex $A=\{v_0,v_1,\dots, v_5\}$ into a subset of two pentachora  $\partial_1A$ and $\partial_3A$ the complementary subset of other four pentachora. The corresponding algebraic relation takes the form 
\begin{equation}\label{eq:pachner2-4}
\sum_{k,m,n,u,v,w}Q^{i,l,m}_{v,w}Q^{v,j,n}_{p,u}Q^{w,u,k}_{q,r}\bar Q_{m,n,k}^{s,t}=\sum_{u}Q^{l,j,t}_{u,r}Q^{i,u,s}_{p,q},
\end{equation}
and all other forms can be obtained from it by using the symmetry relations~\eqref{eq:sym-rel}.
\begin{lemma}\label{lem2-4}
 The relation \eqref{eq:pachner2-4} holds true  for the weights \eqref{eq:qijklm} and \eqref{eq:qijklm-bar}.
\end{lemma}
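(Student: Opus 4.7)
The plan is to prove \eqref{eq:pachner2-4} by a direct computation, exactly parallel in spirit to the proof of the $(3,3)$ lemma: substitute the explicit delta-function formulas \eqref{eq:qijklm} and \eqref{eq:qijklm-bar}, let the Kronecker deltas collapse most of the summations, and compare the resulting power of $\omega$ on the two sides.

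First I would work on the left hand side. Each of the three $Q$-factors contributes a prefactor $1/\sqrt N$, and $\bar Q$ contributes another $1/\sqrt N$, giving an overall $N^{-2}$. The four tensors together produce eight delta functions in the six summation variables $k,m,n,u,v,w$ and the nine external labels $i,j,l,p,q,r,s,t$. Reading these deltas off in order gives $v=i+l$, $w=l+m$, $u=j+n$, $p=v+j$, $q=w+u$, $r=u+k$, $s=m+n$, $t=n+k$. After using the first six to eliminate $v,w,u$ and to produce the external constraints $p=i+l+j$, $q=l+m+j+n$, $r=j+n+k$, the remaining $\bar Q$-deltas let me solve $m=s-n$ and $k=t-n$ in terms of a single free parameter $n\in\Z/N\Z$. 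Substituting these into the external constraints gives $q=l+j+s$ and $r=j+t$, independent of $n$, which is the first sanity check.

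The key observation is then that the combined exponent
\begin{equation*}
im+vn+wk-mk=im+(i+l)n+(l+m)k-mk
\end{equation*}
simplifies, after inserting $m=s-n$ and $k=t-n$, to $is+lt$, which is also independent of $n$. Hence the sum over $n$ contributes a clean factor of $N$, and the left hand side becomes
\begin{equation*}
N^{-1}\omega^{is+lt}\,\delta_{p,i+l+j}\,\delta_{q,l+j+s}\,\delta_{r,j+t}.
\end{equation*}

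For the right hand side I would directly expand the two $Q$-factors: the deltas force $u=l+j$, $r=j+t$, $p=i+u=i+l+j$, $q=u+s=l+j+s$, and the $u$-summation has a single surviving term, producing $N^{-1}\omega^{lt+is}$ times the same three external Kronecker deltas. Comparing the two expressions finishes the proof. The only step requiring any care is the algebraic cancellation that eliminates $n$ from the exponent on the left hand side; everything else is bookkeeping of indices, and the rest of the $(2,4)$ Pachner relations follow from this one by means of the symmetry identities \eqref{eq:sym-rel} exactly as in the $(3,3)$ case.
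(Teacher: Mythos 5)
Your computation is correct --- I checked the key step: with $m=s-n$, $k=t-n$ the exponent $im+(i+l)n+(l+m)k-mk$ does collapse to $is+lt$, the external constraints $q=l+j+s$, $r=j+t$ are indeed $n$-independent, and the free sum over $n$ supplies exactly the factor $N$ needed to match the $N^{-1}\omega^{is+lt}$ on the right hand side. However, your route is genuinely different from the paper's. The paper does not recompute anything: it rewrites \eqref{eq:pachner2-4} in the matrix form
\begin{equation*}
\sum_{k,m,n}R^{m}_{12}R^{n}_{13}R^{k}_{23}\bar Q_{m,n,k}^{s,t}=R^{t}_{23}R^{s}_{12}
\end{equation*}
and derives it in two lines from the twisted pentagon relation \eqref{eq:twist-pent-1}, which is itself an equivalent repackaging of the type $(3,3)$ relation \eqref{eq:pachner3-3}. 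That deduction is what underpins the subsequent remark that the $(2,4)$ relation is strictly weaker than the $(3,3)$ relation: the implication $(3,3)\Rightarrow(2,4)$ is exhibited structurally and cannot be reversed. Your direct delta-function bookkeeping buys self-containedness --- it needs nothing beyond the explicit weights \eqref{eq:qijklm} and \eqref{eq:qijklm-bar} and would work even if one had not yet introduced the $R$-matrices --- but it hides the logical dependence of $(2,4)$ on $(3,3)$ that the paper wants to make visible. Both proofs are valid; if you keep yours, you should still note separately that $(2,4)$ follows from $(3,3)$ if you want to retain the content of the paper's remark.
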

\begin{proof}
 We rewrite \eqref{eq:pachner2-4} in the equivalent matrix form
\begin{equation}
\sum_{k,m,n}R^{m}_{12}R^{n}_{13}R^{k}_{23}\bar Q_{m,n,k}^{s,t}=R^{t}_{23}R^{s}_{12}
\end{equation}
and easily prove it by using \eqref{eq:twist-pent-1}:
\begin{multline}
 \sum_{k,m,n}R^{m}_{12}R^{n}_{13}R^{k}_{23}\bar Q_{m,n,k}^{s,t}=\frac1{\sqrt{N}}\sum_{n}\omega^{-(s-n)(t-n)}R^{s-n}_{12}R^{n}_{13}R^{t-n}_{23}\\=N^{-1}\sum_{n}R^{t}_{23}R^{s}_{12}=R^{t}_{23}R^{s}_{12}.
\end{multline}
\end{proof}
\begin{remark}
 As the proof of Lemma~\ref{lem2-4} shows,  the Pachner relation of the type (2,4) given by equation~\eqref{eq:pachner2-4} is clearly weaker than  the Pachner relation of the type (3,3) given by equation~\eqref{eq:pachner3-3}. Namely, we cannot revert the argument of the proof to obtain an equivalence between the two relations.
\end{remark}
\subsection{The type (1,5)}
We split  the pentachora of the 5-simplex $A=\{v_0,v_1,\dots, v_5\}$ into the a set composed of only one pentachoron $\partial_1A$ and the complementary set of other 5 pentachora.
The corresponding algebraic relation takes the form 
\begin{equation}\label{eq:pachner1-5}
N^{-1}\sum_{j,k,l,m,n,r,t,v,w,x}Q^{i,l,m}_{v,w}Q^{v,j,n}_{p,x}Q^{w,x,k}_{q,r}\bar Q_{m,n,k}^{s,t}\bar Q_{l,j,t}^{u,r}=Q^{i,u,s}_{p,q},
\end{equation}
where we have taken into account the fact that vertex $v_1$ is in the interior of the 4-ball corresponding to the left hand side of \eqref{eq:pachner1-5} and, according to our TQFT rules, we have divided the corresponding sum by $N$. As before, all other forms of the Pachner relations of the type (1,5) can be obtained from \eqref{eq:pachner1-5} by using the symmetry relations~\eqref{eq:sym-rel}.
\begin{lemma}
  The relation \eqref{eq:pachner1-5} holds true  for the weights \eqref{eq:qijklm} and \eqref{eq:qijklm-bar}.
\end{lemma}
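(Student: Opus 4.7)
The plan is to deduce \eqref{eq:pachner1-5} from the already established type $(2,4)$ relation \eqref{eq:pachner2-4} together with an elementary orthogonality identity between $Q$ and $\bar Q$. The key observation is that the first four tensor factors on the left-hand side of \eqref{eq:pachner1-5} reproduce, up to relabelling, exactly the left-hand side of \eqref{eq:pachner2-4}: explicitly, the partial sum
\begin{equation*}
\sum_{k,m,n,v,w,x}Q^{i,l,m}_{v,w}Q^{v,j,n}_{p,x}Q^{w,x,k}_{q,r}\bar Q_{m,n,k}^{s,t}
\end{equation*}
coincides with the left-hand side of \eqref{eq:pachner2-4}, the label $x$ here playing the role of the internal index denoted $u$ there. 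Geometrically this coincidence reflects the fact that removing the unique interior vertex $v_1$ from the $(1,5)$ configuration yields the $(2,4)$ configuration.

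The first step will be to apply Lemma~\ref{lem2-4} to this sub-expression, replacing it by $\sum_{u'}Q^{l,j,t}_{u',r}Q^{i,u',s}_{p,q}$; I write $u'$ to avoid clash with the external index $u$ already present in \eqref{eq:pachner1-5}. The left-hand side of \eqref{eq:pachner1-5} then becomes
\begin{equation*}
N^{-1}\sum_{u'}Q^{i,u',s}_{p,q}\Bigl(\sum_{j,l,r,t}Q^{l,j,t}_{u',r}\bar Q_{l,j,t}^{u,r}\Bigr).
\end{equation*}

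The second step will be to evaluate the inner contraction $\sum_{j,l,r,t}Q^{l,j,t}_{u',r}\bar Q_{l,j,t}^{u,r}$ by direct substitution of the explicit formulas \eqref{eq:qijklm} and \eqref{eq:qijklm-bar}: the phases $\omega^{\pm lt}$ cancel, the products of Kronecker deltas force $u=u'$ and determine $l$ and $r$ in terms of $j$ and $t$, and the two remaining free sums over $j$ and $t$ each contribute a factor $N$. The outcome is $N\delta_{u,u'}$, so that performing the $u'$ sum and multiplying by the prefactor $N^{-1}$ yields exactly $Q^{i,u,s}_{p,q}$, which is the right-hand side of \eqref{eq:pachner1-5}.

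I do not foresee any genuine obstacle: the argument is a short bookkeeping reduction to a previous lemma followed by an orthogonality calculation. The only point requiring care is the verification that the factor $N^{-1}$ in \eqref{eq:pachner1-5} --- which is the TQFT weight $N^{-|X_0^{\mathrm{int}}|}$ associated with the single interior vertex $v_1$ --- is precisely absorbed by the factor $N$ produced by the $Q\bar Q$ contraction, providing a small but satisfying consistency check on the normalization introduced at the outset.
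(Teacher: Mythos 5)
Your proof is correct and follows essentially the same route as the paper: first apply the type $(2,4)$ relation to collapse the sum over $k,m,n,v,w$ (with $x$ as the internal index), then evaluate the remaining contraction $\sum_{j,l,r,t}Q^{l,j,t}_{u',r}\bar Q_{l,j,t}^{u,r}=N\delta_{u,u'}$, which absorbs the prefactor $N^{-1}$. The paper performs exactly this two-step reduction, merely writing the delta-function bookkeeping inline rather than isolating the orthogonality identity.
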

\begin{proof}
 By using \eqref{eq:pachner2-4}, we write
\begin{multline}
N^{-1}\sum_{j,k,l,m,n,r,t,v,w,x}Q^{i,l,m}_{v,w}Q^{v,j,n}_{p,x}Q^{w,x,k}_{q,r}\bar Q_{m,n,k}^{s,t}\bar Q_{l,j,t}^{u,r}\\
 =N^{-1}\sum_{j,l,r,t,x}Q^{l,j,t}_{x,r}Q^{i,x,s}_{p,q}\bar Q_{l,j,t}^{u,r}=N^{-2}\sum_{j,l,r,t,x}\delta_{x,u}\delta_{x,l+j}\delta_{r,j+t}Q^{i,x,s}_{p,q}\\
=Q^{i,u,s}_{p,q}N^{-2}\sum_{j,l,r,t}\delta_{u,l+j}\delta_{r,j+t}=Q^{i,u,s}_{p,q}N^{-2}\sum_{j,l,t}\delta_{u,l+j}=Q^{i,u,s}_{p,q}N^{-2}\sum_{l,t}1=Q^{i,u,s}_{p,q}.
\end{multline}
\end{proof}

\section{Examples of calculation}
Let us illustrate the calculation of the partition function $M_\omega(S^4)$. by using the standard two pentachora triangulation of $S^4$. 
This is the easiest example to calculate. The 4-sphere can be triangulated in the standard way with two pentachora of opposite signs and  five vertices so that we have
\begin{multline}
M_\omega(S^4)=N^{-5}\sum_{i,j,k,l,m}Q^{i,j,k}_{l,m}\bar Q_{i,j,k}^{l,m}\\=N^{-6}\sum_{i,j,k,l,m}\delta_{j,i+k}\delta_{l,k+m}=N^{-6}\sum_{i,k,m}1=N^{-3}.
\end{multline}
Similar but much more lengthy calculations are needed for other examples. Leaving details to a separate publication, we collect the results into Table~\ref{tab} 
\begin{table}[h]
\[
 \begin{array}{|c|c|c|}
 \hline
  X& \chi(X)&N^{3\chi(X)/2}M_\omega(X)\\
  \hline
  \hline
  S^4   &2&1\\
  \hline
  S^2\times S^2&4&(3+(-1)^N)/2\\
  \hline
  \C P^2&3&N^{-1/2}\sum_{k=1}^N\omega^{k^2}\\
  \hline
  S^3\times S^1&0& 1\\
  \hline
  S^2\times S^1\times S^1&0&(3+(-1)^N)/2 \\
  \hline
  \end{array}
\]
\caption{}
\label{tab}
\end{table}
where $\chi(X)$ is the Euler characteristic. This table is consistent with the following conjecture. 
\begin{conjecture}
 For a given oriented closed 4-manifold $X$, the normalized quantum invariant  $N^{3\chi(X)/2}M_\omega(X)$, considered as a function on the set of all complex roots of unity, takes only finitely many different values.
\end{conjecture}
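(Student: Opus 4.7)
The plan is to reformulate $M_\omega(X)$ as a Gauss sum of a quadratic form on a finite abelian group constructed topologically from $X$, and then to invoke classical finiteness results on such Gauss sums as $N$ and $\omega$ vary.

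First, I would substitute the explicit weights~\eqref{eq:qijklm}--\eqref{eq:qijklm-bar} into the definition of $M_\omega(X)$ on any fixed Delta triangulation and contract over internal tetrahedra. The delta functions $\delta_{l,i+j}\delta_{m,j+k}$ impose a system of $\Z/N\Z$-linear equations on the sum variables, while the $\omega^{\pm ik}$ factors combine into $\omega^{B(\phi)}$ for some bilinear expression $B$ in the remaining indices. After this reduction one expects the partition function to take the shape
\begin{equation*}
M_\omega(X) \;=\; N^{\alpha(X)} \sum_{\phi \in K_N(X)} \omega^{Q_X(\phi)},
\end{equation*}
where $K_N(X)$ is the kernel of the resulting linear system (a finite abelian group built from $(\Z/N\Z)$-valued indices on the triangulation), $Q_X$ is a quadratic form on $K_N(X)$, and $\alpha(X)$ absorbs the various $N^{-1/2}$ and $N^{-|X_0^{\mathrm{int}}|}$ prefactors.

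Second, Theorem~\ref{thm} forces the right-hand side to be a topological invariant, so $(K_N(X),Q_X)$ must admit a canonical topological description. The abelian structure of the weights is that of a $\Z/N\Z$-valued lattice gauge theory with quadratic action; the natural guess is that $K_N(X)$ is a product of mod-$N$ (co)homology groups of $X$ and that $Q_X$ is the reduction modulo $N$ of an integer-valued quadratic form determined by $X$ --- most plausibly built from the intersection pairing on $H_2(X)$ together with cup-product terms involving $H^1(X)$. Once such an identification is in hand, the conjecture reduces to the classical statement that, for a fixed integral quadratic form $Q$ on a free abelian group $L$, the normalized Gauss sum $|L/NL|^{-1/2}\sum_{x \in L/NL}\omega^{Q(x)}$ takes only finitely many values as $\omega$ ranges over all complex roots of unity; this is a known consequence of Milgram's formula and Gauss's evaluation of quadratic Gauss sums, together with their multivariable extensions.

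The principal obstacle is the topological identification in the second step: extracting from the combinatorial contraction the canonical group $K_N(X)$ and the integral form $Q_X$. The entries in Table~\ref{tab} are consistent with this picture --- the $\C P^2$ entry is literally a classical quadratic Gauss sum of the $(+1)$ intersection form, and $S^4$ and $S^3\times S^1$ both have trivial intersection form and value~$1$ --- but the factor $(3+(-1)^N)/2$ occurring for both $S^2\times S^2$ and $S^2\times S^1\times S^1$ signals a subtler parity dependence, plausibly arising from a mod-$2$ characteristic class or a $2$-torsion contribution to the gauge-theoretic data. Carrying out the contraction explicitly on triangulations of these two examples, along the lines of the $S^4$ calculation in the paper, and matching the outcome against candidate topological formulas, appears to be the most direct route to isolating $(K_N(X),Q_X)$; once that is achieved, the finiteness assertion follows from the Gauss-sum machinery described above.
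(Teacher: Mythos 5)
The statement you are addressing is stated in the paper as a \emph{conjecture}: the paper offers no proof, only the computations collected in Table~\ref{tab} (with details deferred to a separate publication) as supporting evidence. So there is no ``paper proof'' to match, and your proposal must be judged on its own as an attempted proof. As such it is not one: it is a research program whose decisive step is exactly the one you yourself flag as ``the principal obstacle,'' namely the identification of the contracted state sum with a Gauss sum of a canonically, topologically defined quadratic form $(K_N(X),Q_X)$. Without that identification carried out --- and it is not carried out for a single nontrivial example beyond observing consistency with Table~\ref{tab} --- nothing is proved. Step one (the delta functions cut out a $\Z/N\Z$-linear subspace and the remaining weight is $\omega$ to a quadratic expression) is correct and essentially immediate from \eqref{eq:qijklm}--\eqref{eq:qijklm-bar}, but it only establishes that $M_\omega(X)$ is \emph{some} Gauss sum depending a priori on the triangulation; the conjecture's content is precisely the uniformity of that data in $N$ and $\omega$.

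Two further points in your step three are glossed over and would need real work even granting the topological identification. First, the solution group of a $\Z/N\Z$-linear system given by a fixed integer matrix is \emph{not} in general of the form $L/NL$ for a fixed lattice $L$; its isomorphism type varies with $N$ through the elementary divisors of the matrix (the kernel of multiplication by $2$ on $\Z/N\Z$ is trivial or $\Z/2\Z$ according to the parity of $N$), and this is visibly the source of the $(3+(-1)^N)/2$ entries, not merely a ``subtler parity dependence'' to be absorbed later. Second, the classical finiteness you invoke for $|L/NL|^{-1/2}\sum_{x\in L/NL}\omega^{Q(x)}$ holds for nondegenerate forms; if $Q$ has a radical whose rank modulo $N$ jumps with $N$, the normalized sum can grow without bound, so one must prove that the fixed exponent $3\chi(X)/2$ exactly compensates the $N$-dependent rank of the radical for every $N$. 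Both issues are plausibly tractable, and your overall picture (intersection form on $H_2$, cup products from $H^1$, a mod-$2$ characteristic-class correction) is a reasonable guess consistent with all five table entries, but as written the argument establishes nothing beyond what the paper already asserts as a conjecture.
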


\def\cprime{$'$} \def\cprime{$'$}

\end{document}